\newtheorem{thm}{Theorem}[section]
\newtheorem{lemma}[thm]{Lemma}
\theoremstyle{remark}
\newtheorem{rem}[thm]{Remark}
\theoremstyle{definition}
\newtheoremstyle{Claim}{}{}{\itshape}{}{\itshape\bfseries}{:}{ }{#1}
\theoremstyle{Claim}
\newcommand{\T}{{\mathbb{T}}}
\newcommand{\R}{\mathbb{R}}
\newcommand{\N}{\mathbb{N}}
\newcommand{\eps}{\varepsilon}
\theoremstyle{plain}
\begin{document}

\title[]{Remarks on the rate of convergence of the vanishing viscosity process of Hamilton-Jacobi equations}

\author{Alessandro Goffi}
\address{Dipartimento di Matematica e Informatica ``Ulisse Dini'', Universit\`a degli Studi di Firenze, 
viale G. Morgagni 67/A, 50134 Firenze (Italy)}
\curraddr{}
\email{alessandro.goffi@unifi.it}

\thanks{The author is member of the Gruppo Nazionale per l'Analisi Matematica, la Probabilit\`a e le loro Applicazioni (GNAMPA) of the Istituto Nazionale di Alta Matematica (INdAM), and he was partially supported by the INdAM-GNAMPA project 2024.
}

\date{\today}

\subjclass[2020]{35F21,35R09,41A25}
\keywords{ Nonlocal Hamilton-Jacobi equations, Rate of convergence, Transport equations, Vanishing viscosity.}

\begin{abstract}
We establish a linear $L^p$ rate of convergence, $1<p<\infty$, with respect to the viscosity $\eps$ for the vanishing viscosity process of semiconcave solutions of Hamilton-Jacobi equations by regularizing the PDE with the half-Laplacian $-\eps(-\Delta)^{1/2}$. Our result reveals a nonlocal phenomenon, since it improves the known estimates obtained via the classical second order vanishing viscosity regularization $\eps\Delta u$. It also highlights a faster rate of convergence than the available $\mathcal{O}(\eps|\log\eps|)$ rate in sup-norm obtained by the doubling of variable technique for this nonlocal approximation. The result is based on integral methods and does not use the maximum principle.
\end{abstract}

\maketitle

\section{Introduction}
It is well-known that viscosity solutions $u:\R^n\times(0,T)\to\R$ of the first-order Hamilton-Jacobi equation
\begin{equation}\label{HJintro}
\begin{cases}
\partial_t u+H(Du)=f(x,t)&\text{ in }\R^n\times(0,T),\\
u(x,0)=u_0(x)&\text{ in }\R^n,
\end{cases}
\end{equation}
can be obtained by the so-called method of vanishing viscosity \cite{BCD,CL83tams,CEL}, i.e. taking the limit as $\eps\to0^+$ of the solution $u_\eps$ of the parabolic regularization
\begin{equation}\label{HJintroVisc}
\begin{cases}
\partial_t u_\eps-\eps\Delta u_\eps+H(Du_\eps)=f(x,t)&\text{ in }\R^n\times(0,T),\\
u_\eps(x,0)=u_0(x)&\text{ in }\R^n.
\end{cases}
\end{equation}
This method, going back to the theory of scalar conservation laws in fluid dynamics, allows to prove, under certain assumptions on $H:\R^n\to\R$ and the data of the equation, the uniform convergence of $u_\eps$ to a solution $u$ of \eqref{HJintro} on compact subsets of $\R^n\times(0,T)$, see \cite{BCD,CEL,CL83tams,L82book}. The rate of the convergence of this process depends strongly on the interplay between the hypotheses of $H$, $u_0$ and $u$, and it has been the object of intensive research. The interest on this problem has been renewed recently in the theory of Mean Field Control and Games, see for instance \cite[Section 3]{Cecchin} and \cite{DaudinDelarueJackson}.\\

 When $H$ is locally Lipschitz and $u\in W^{1,\infty}_x$, which in turn requires certain additional coercivity conditions on $H$, one proves the $\mathcal{O}(\sqrt{\eps})$ rate \cite{CL84,EvansARMA,K67II,L82book,Souganidis}, see also \cite{Calder},
\begin{equation}\label{root}
\|u_\eps-u\|_{L^\infty(\R^n\times(0,T))}\leq C\sqrt{\eps}.
\end{equation}
This is in general optimal in view of the counterexample recently found in \cite{Tranetal}, and the constant $C$ can be made explicit, behaving as $\mathcal{O}(\sqrt{T\eps})$. When the data are less regular, e.g. $H$ and/or $u$ are H\"older continuous, the rate of convergence becomes slower, as shown in \cite{BCD} via the doubling of variable technique.\\
When $u$ is semiconcave or $H$ is uniformly convex, one expects a better rate of convergence: in fact, when $\Delta u\leq C$ one proves the one-side linear rate
\[
u_\eps-u\leq C\eps,
\]
see e.g. \cite{Calder,CGM,L82book}, or an average two-side $\mathcal{O}(\eps)$ rate \cite{TranBook} when $H$ is uniformly convex. However, a better lower bound than $\mathcal{O}(\sqrt{\eps})$ on $u_\eps-u$ remains unknown, even when $H$ is uniformly convex, except for some specific low-dimensional examples \cite{Tranetal}. The recent paper \cite{CGM}, see also the earlier analysis in \cite{LinTadmor}, established, in the case of semiconcave solutions and the flat torus $\T^n$, the $L^1$ linear rate
\[
\|u_\eps-u\|_{L^\infty(0,T;L^1(\T^n))}\leq C\eps,
\]
valid also for some nonconvex Hamiltonians $H$. As a byproduct, this gave a rate of convergence for gradients in the vanishing viscosity process and a rate of convergence for certain classes of quasilinear hyperbolic systems. Notably, it provides by interpolation with \eqref{root} the estimate
\[
\|u_\eps-u\|_{L^\infty(0,T;L^p(\T^n))}\leq C\eps^{\frac12+\frac{1}{2p}}.
\]
This bound deteriorates as $p\to+\infty$ to the classical $\mathcal{O}(\sqrt{\eps})$ rate for Lipschitz solutions. The aim of this note is to consider the rate of convergence for semiconcave solutions of \eqref{HJintro} by exploiting a different regularization with the nonlocal operator $\eps(-\Delta)^s$, $s\in(0,1)$. Therefore, we consider the parabolic nonlocal problem for periodic solutions $u=u_{\eps,s}$ of 
\begin{equation}\label{HJintroViscFrac}
\begin{cases}
\partial_t u_{\eps,s}+\eps(-\Delta )^su_{\eps,s}+H(Du_{\eps,s})=f(x,t)&\text{ in }\R^n\times(0,T),\\
u_{\eps,s}(x,0)=u_0(x)&\text{ in }\R^n.
\end{cases}
\end{equation}
Here $(-\Delta)^s=(-\Delta_{\T^n})^s$ is the fractional Laplacian on the flat torus $\T^n$: this can be defined via multiple Fourier series and, for suitably regular functions, one has the standard representation formula, see \cite{RoncalStinga}. Nonetheless, we will never make use of integral formulas in the sequel and this is a distinctive feature of our method of proof compared to the literature \cite{Biswas,Droniou,DroniouImbert}. It is well-known that solutions of first-order Hamilton-Jacobi equations with uniformly convex Hamiltonians are semiconcave, regardless of the regularity of the initial datum \cite{K67II,EvansARMA}: this property implies the uniqueness of Lipschitz solutions for Hamilton-Jacobi equations with convex $H$, and continues to  hold for its nonlocal counterpart, being the estimate independent of the viscosity, cf. \cite{CGsima}. \\
The study of this nonlocal approximation and the related speed of convergence date back to \cite{Biswas,DroniouImbert} and earlier to the study of nonlocal conservation laws \cite{Droniou}. In the case of Lipschitz solutions and locally Lipschitz Hamiltonians, the authors in \cite{Biswas,DroniouImbert} obtained the following rates for the nonlocal vanishing viscosity process in sup-norm
\[
\|u_{\eps,s}-u\|_{\infty}=
\begin{cases}
\mathcal{O}(\eps)&\text{ if }0<s<\frac12,\\
\mathcal{O}(\eps|\log\eps|)&\text{ if }s=\frac12,\\
\mathcal{O}(\eps^\frac{1}{2s})&\text{ if }\frac12<s<1.
\end{cases}
\]
In particular, we show here that, in the borderline case $s=\frac12$, this rate can be improved to $\mathcal{O}(\eps)$ in any $L^p$ norm for finite $p\in(1,\infty)$ by means of different methods, see Theorem \ref{main}. This reveals a nonlocal phenomenon highlighting a faster approximation via this nonlocal vanishing viscosity procedure. Remarkably, it boosts the rate of convergence of the local case $s=1$, as we show the $p$-independent linear rate
\[
\|u_{\eps,\frac12}-u\|_{L^\infty(0,T;L^p(\T^n))}\leq C\eps,\ 1<p<\infty.
\]
Nonetheless, we are not able to reach the endpoint $p=\infty$, a case which remains open in general even when $s=1$ for semiconcave solutions, see Remark \ref{finalrem} for further comments. As far as the method of proof is concerned, we will use integral duality methods as initiated by L.C. Evans \cite{EvansARMA} and exploit integrability estimates of nonlocal Fokker-Planck equations with weakly compressible velocity fields $b$, i.e. satisfying
\[
[\mathrm{div}(b)]^-\in L^1_t(L^\infty_x),
\]
when the space-time drift $b(x,t)\sim-D_pH(Du_\eps(x,t))$ (i.e. the drift of the linearization of \eqref{HJintroVisc}). Integral estimates for first-order transport and continuity equations are classical under this condition within the Diperna-Lions setting, cf. \cite{LBL}. This assumption is always satisfied when the solution $u$ of \eqref{HJintroViscFrac} is semiconcave and $H$ is uniformly convex. Our linear rate is then deduced by means of the boundedness of Riesz transform appearing in the theory of singular integrals due to A. P. Calder\'on, cf. \cite{AdamsHedberg,Stein}, namely
\[
A^{-1}\|(I-\Delta)^{\frac{\alpha}{2}}u\|_{L^p}\leq \|u\|_{W^{\alpha,p}}\leq A\|(I-\Delta)^{\frac{\alpha}{2}}u\|_{L^p},\ \alpha\in\N,\ 1<p<\infty, A>0.
\]
A motivation for this analysis is, among others, the study of the rate of convergence of nonlocal vanishing viscosity approximations of the first-order Mean Field Games system
\[
\begin{cases}
-\partial_t u+H(Du)=F[m]&\text{ in }Q_T,\\
\partial_t m-\mathrm{div}(D_pH(Du)m)=0&\text{ in }Q_T,\\
u(x,T)=u_T(x),\ m(x,0)=m_0(x)&\text{ in }\T^n.
\end{cases}
\]
We expect that the result of the manuscript could improve the known speed of convergence with respect to the classical local viscous approximation, at least in the case of regularizing couplings $F$ and under certain assumptions on $H$ and $u_T$. These aspects will be the matter of future research.
\section{Rate of convergence of the vanishing viscosity process for HJ equations via nonlocal operators}

Consider the regularized Cauchy problem satisfied by $u_{\eps,\frac12}:=u_\eps$
\begin{equation}\label{HJvisc}
\begin{cases}
\partial_t u_\eps+\eps(-\Delta)^\frac12 u_\eps+H(Du_\eps)=f(x,t)&\text{ in }Q_T:=\T^n\times(0,T),\\
u_\eps(x,0)=u_0(x)&\text{ in }\T^n.
\end{cases}
\end{equation}
Here $H:\R^n\to\R$, the so-called Hamiltonian, is a $C^2$, uniformly convex, function. We will assume that $f:Q_T\to\R$ is $L^1_t(L^\infty_x)$ semiconcave, i.e.
\[
D^2f\leq c_f(t)\in L^1_t.
\]
In place of the previous inequality we will sometimes write $\|(D^2 f(t))^+\|_{L^\infty(\T^n)}\leq c_f(t)$ or use the compact notation $\|(D^2f)^+\|_{L^1_t(L^\infty_x)}<\infty$.\\

Denote by $u$ the solution of the first-order equation
\begin{equation}\label{HJ}
\begin{cases}
\partial_t u+H(Du)=f(x,t)&\text{ in }Q_T,\\
u(x,0)=u_0(x)&\text{ in }\T^n.
\end{cases}
\end{equation}
We prove the following preliminary result: it shows $L^q$ integrability estimates for the dual of the linearization of \eqref{HJvisc} under suitable assumptions on the negative part of the divergence of its velocity field:
\begin{lemma}\label{lemmaFP}
Let $\alpha\in L^{q}(\T^n)$, $1<q<\infty$, $\alpha\geq0$ and $\rho=\rho_\eta$, $\eta\geq0$, be the nonnegative solution to the backward problem
\begin{equation}\label{fp12gen}
\begin{cases}
-\partial_t \rho+\eta(-\Delta)^\frac12 \rho+\mathrm{div}(b(x,t)\rho)=0&\text{ in }Q_\tau:=\T^n\times(0,\tau),\\
\rho(\tau)=\alpha&\text{ in }\T^n.
\end{cases}
\end{equation}
Assume that \begin{equation}\label{divb}
[\mathrm{div}(b)]^-\in L^1(0,\tau;L^\infty(\T^n)).
\end{equation}
Then there exists a constant $C_F>0$, depending on $q$ and $\|[\mathrm{div}(b)]^-\|_{L^1(0,\tau;L^\infty(\T^n))}$ but not on $\eta$, such that
\[
\|\rho(\omega)\|_{L^{q}(\T^n)}\leq C_F\|\rho(\tau)\|_{L^{q}(\T^n)},\ \omega\in[0,\tau).
\]
The estimate also holds for $q=\infty$ and the constant is stable in the limit $q\to\infty$.
\end{lemma}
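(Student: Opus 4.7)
The plan is to run a Gronwall-type energy estimate in $L^q$, testing \eqref{fp12gen} against $\rho^{q-1}$ and exploiting the sign of the fractional dissipation to eliminate the dependence on $\eta$. First, I would multiply \eqref{fp12gen} by $\rho^{q-1}$ and integrate over $\T^n$: the time-derivative contribution is $-\frac{1}{q}\frac{d}{dt}\int_{\T^n}\rho^q\,dx$, while two integrations by parts on the drift term give
\[
\int_{\T^n}\mathrm{div}(b\rho)\rho^{q-1}\,dx = \frac{q-1}{q}\int_{\T^n}\mathrm{div}(b)\,\rho^q\,dx,
\]
which is bounded below by $-\frac{q-1}{q}\|[\mathrm{div}(b(\cdot,t))]^-\|_{L^\infty(\T^n)}\int_{\T^n}\rho^q\,dx$ by \eqref{divb}.

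For the nonlocal term I would invoke a Stroock--Varopoulos (or C\'ordoba--C\'ordoba) type inequality, which since $\rho\geq 0$ yields the sign information
\[
\int_{\T^n}(-\Delta)^{1/2}\rho \cdot \rho^{q-1}\,dx \geq \frac{4(q-1)}{q^2}\int_{\T^n}\bigl|(-\Delta)^{1/4}(\rho^{q/2})\bigr|^2\,dx \geq 0.
\]
This is the step responsible for the $\eta$-independence of $C_F$: the nonlocal contribution is simply discarded with a favourable sign rather than being absorbed through some Cauchy--Schwarz trick that would reintroduce $\eta$. Combining the three pieces yields the differential inequality
\[
\frac{d}{dt}\int_{\T^n}\rho^q\,dx \geq -(q-1)\|[\mathrm{div}(b(\cdot,t))]^-\|_{L^\infty(\T^n)}\int_{\T^n}\rho^q\,dx,
\]
and Gronwall on the reversed interval $[\omega,\tau]$ produces
\[
\|\rho(\omega)\|_{L^q(\T^n)} \leq \|\rho(\tau)\|_{L^q(\T^n)}\exp\!\left(\tfrac{q-1}{q}\|[\mathrm{div}(b)]^-\|_{L^1(0,\tau;L^\infty(\T^n))}\right).
\]
The exponential prefactor is manifestly independent of $\eta$ and converges to $\exp(\|[\mathrm{div}(b)]^-\|_{L^1(0,\tau;L^\infty(\T^n))})$ as $q\to\infty$, which gives the stability claim; the endpoint $q=\infty$ can then be handled either by passing to the limit in this bound or directly via a maximum-principle argument on \eqref{fp12gen}.

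The step I expect to be the main technical obstacle is the rigorous justification of the two integrations by parts and of the Stroock--Varopoulos inequality when $b$ merely satisfies the weakly compressible condition \eqref{divb} and $\rho$ is only a nonnegative weak solution, since neither object a priori admits the regularity required by the formal computation above. The standard remedy is to mollify $\rho$ and run a DiPerna--Lions commutator argument in the spirit of \cite{LBL} to push the mollification through the drift term with a vanishing error, while the fractional term is regularized by composing with the fractional heat semigroup; the uniform $L^q$ bound above then propagates to the limit by linearity of \eqref{fp12gen}.
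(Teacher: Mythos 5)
Your proposal is correct and follows essentially the same strategy as the paper: test the equation against $\rho^{q-1}$, integrate the drift term by parts to isolate $\mathrm{div}(b)$ against $\rho^q$, discard the nonlocal dissipation via the C\'ordoba--C\'ordoba/Stroock--Varopoulos pointwise inequality (which is exactly how the paper eliminates the dependence on $\eta$), and close with Gronwall. The paper performs a time reversal $\widetilde\rho(x,t)=\rho(x,\tau-t)$ rather than running Gronwall backward on $[\omega,\tau]$, and it tests with $q\rho^{q-1}$ rather than $\rho^{q-1}$, but these are cosmetic; your explicit exponential constant and your observations about the $q\to\infty$ limit and the rigorous justification of the formal computations are consistent with (and slightly more detailed than) what the paper records.
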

\begin{proof}
We consider $\widetilde \rho(x,t)=\rho(x,\tau-t)$, so that the Cauchy problem \eqref{fp12gen} becomes forward in time. In this case one obtains the estimate by approximation, testing \eqref{fp12gen} with $q\widetilde \rho^{q-1}$, $q>1$, which leads to the following inequality
\[
\frac{d}{dt}\int_{\T^n}\widetilde\rho^q(t)\,dx\leq (q-1)\|[\mathrm{div}(b)(\tau -t)]^-\|_{L^\infty(\T^n)}\int_{\T^n}\widetilde\rho^q(t)\,dx.
\]
This implies the estimate using a Gronwall-type argument. The only difference with the local case driven by the Laplacian is the treatment of the nonlocal term. While in the local case $s=1$ one has
\[
-\eta q\int_{\T^n}\widetilde\rho^{q-1}(t)\Delta \widetilde\rho(t)\,dx=\eta\ q(q-1)\int_{\T^n}\widetilde\rho^{q-2}(t)|D\widetilde\rho(t)|^2\,dx\geq0
\]
directly by integrating by parts, here we need a lower bound for fractional derivatives. One can apply for instance Lemma 2.4 and Lemma 2.5 in \cite{Cordoba} or Lemma 1 in \cite{CordobaPNAS} (see also Appendix A in \cite{Constantin} for a proof via the integral formula) to find the inequality
\[
\eta q\int_{\T^n} \widetilde\rho^{q-1}(t)(-\Delta)^\frac12\widetilde\rho(t)\,dx\geq \eta c_q\int_{\T^n}|(-\Delta)^\frac14\widetilde\rho^{\frac{q}{2}}(t)|^2\,dx(\geq0).
\]
\end{proof}
\begin{rem}\label{energy}
The existence and uniqueness of weak solutions $\rho$ in the class
\[
Y=\{\rho\in L^2(0,\tau;H^\frac12(\T^n)),\partial_t\rho\in L^2(0,\tau;H^{-1}(\T^n))\}
\]
under \eqref{divb} can be obtained following the scheme of \cite[Theorem 4.3]{Figalli}, via an abstract theorem of J.-L. Lions, see \cite[Theorem 4.6]{Figalli}. The well-posedness for nonlocal Fokker-Planck equations is studied also in \cite{Tian} in the context of the Diperna-Lions theory under the assumption \eqref{divb}. We point out that the well-posedness in $Y$ guarantees the use of energy methods and integration by parts in energy spaces in the next proof, see Remark 3.2 in \cite{CGsima}.
\end{rem}
The next is the main result of the paper that we show in the model case of uniformly convex, smooth, Hamiltonians. It is an a priori estimate on the rate of convergence of the vanishing viscosity regularization via the half-Laplacian. We refer the reader to the next Remark \ref{comparison} for a detailed comparison with the literature. 
\begin{thm}\label{main}
Assume that $u_0:\T^n\to\R$ is semiconcave, and let $f:Q_T\to\R$ be $L^1_t(L^\infty_x)$-semiconcave  and $H\in C^2(\R^n)$ be uniformly convex, namely $\theta I_n\leq D^2_{pp}H(p)\leq \Theta I_n$, $0<\theta\leq\Theta$, $\theta,\Theta\in\R$. Then
\[
\|u_{\eps,\frac12}-u\|_{L^\infty(0,T;L^p(\T^n))}\leq C\eps,\ 1< p<\infty,
\]
where $C$ depends on $n,p,\|(D^2f)^+\|_{L^1_t(L^\infty_x)},\theta,\Theta,T,\|(D^2u_0)^+\|_{L^\infty}$, but not on $\eps$.
\end{thm}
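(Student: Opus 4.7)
The strategy I would take is the Evans duality method: linearize the equation for $v := u_\eps - u$ via a convex-combination drift, test against the backward fractional Fokker--Planck equation furnished by Lemma \ref{lemmaFP}, and then exploit the $L^p$-boundedness of the Riesz transforms to convert the residual nonlocal source $\eps(-\Delta)^{1/2} u$ into an $\eps\|Du\|_{L^\infty}$ loss.

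More precisely, subtracting the two PDEs yields, with $b(x,t) := \int_0^1 D_p H((1-s) Du + s Du_\eps) \, ds$,
\[
\partial_t v + \eps(-\Delta)^{1/2} v + b\cdot Dv = -\eps(-\Delta)^{1/2} u, \qquad v(0)=0.
\]
The first point to verify is that the hypothesis \eqref{divb} of Lemma \ref{lemmaFP} holds uniformly in $\eps$. By the semiconcavity of $u_0$ and $f$ and the uniform convexity of $H$, both $u$ and $u_\eps$ are semiconcave with a common $L^1_t$-constant $K$ independent of $\eps$ (see \cite{CGsima} for $u_\eps$); setting $u_s := (1-s)u + s u_\eps$, the elementary trace inequality $\mathrm{tr}(AB) \leq \mathrm{tr}(A)\lambda_{\max}(B)$ for $A \geq 0$ gives
\[
\mathrm{div}(b) = \int_0^1 \mathrm{tr}\bigl(D^2H(Du_s)\, D^2 u_s\bigr) \, ds \leq n\Theta K(t),
\]
so that the adjoint drift $-b$ satisfies $[\mathrm{div}(-b)]^- = [\mathrm{div}(b)]^+ \in L^1_t(L^\infty_x)$ with a constant independent of $\eps$.

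Fix $\tau\in(0,T]$ and $\alpha\in L^q(\T^n)$ of unit norm, $q=p/(p-1)$. Let $\rho$ solve the backward problem
\[
-\partial_t\rho + \eps(-\Delta)^{1/2}\rho - \mathrm{div}(b\rho)=0\ \text{in}\ Q_\tau, \qquad \rho(\tau)=\alpha.
\]
Testing the $v$-equation with $\rho$, using the self-adjointness of $(-\Delta)^{1/2}$, the transport integration by parts, and the adjoint equation for $\rho$, I obtain the duality identity
\[
\int_{\T^n} v(\tau)\alpha\,dx = -\eps\int_0^\tau\!\int_{\T^n}\rho\,(-\Delta)^{1/2} u\,dx\,dt.
\]
By Lemma \ref{lemmaFP}, $\|\rho(t)\|_{L^q(\T^n)}\leq C_F$ uniformly in $\eps$ and $t\in[0,\tau]$. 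Writing $(-\Delta)^{1/2} = \sum_j R_j \partial_j$ with $R_j$ the Riesz transforms (equivalently, using the Calder\'on equivalence $\|(I-\Delta)^{1/2} u\|_{L^{q'}} \sim \|u\|_{W^{1,q'}}$ recalled in the introduction) together with the Lipschitz regularity of $u$, one gets $\|(-\Delta)^{1/2} u(t)\|_{L^{q'}(\T^n)}\leq C_q\|Du(t)\|_{L^\infty}$; H\"older in space--time and duality with $\alpha$ then give $\|v(\tau)\|_{L^p(\T^n)}\leq C\eps$, with $C$ independent of $\eps$ and $\tau$, which is the desired estimate.

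The hard part will be to justify the duality identity rigorously, since $u$ is only Lipschitz-semiconcave (so $(-\Delta)^{1/2} u$ is a priori only a distribution) and $\rho$ is a weak energy solution in the class $Y$ of Remark \ref{energy}. I would handle this by approximating $u$ and the drift $b$ via mollification and passing to the limit using the uniform estimates from the divergence bound above and from Lemma \ref{lemmaFP}; the Diperna--Lions renormalization theory for transport equations under \eqref{divb} is the right conceptual framework, while the Calder\'on equivalence ensures that $(-\Delta)^{1/2} u\in L^{q'}$ in the limit, so that the right-hand side of the duality identity is an honest integral.
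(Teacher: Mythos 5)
Your proposal follows the same core strategy as the paper — Evans-type duality, the $L^q$ estimate for the fractional Fokker--Planck adjoint (Lemma \ref{lemmaFP}), the semiconcavity/uniform-convexity argument for the divergence bound, and the Calder\'on equivalence $H^1_p\simeq W^{1,p}$ to estimate the nonlocal source — so the mathematical content essentially matches. The one point of genuine divergence is the choice of comparison pair. You compare $u_\eps$ directly with the first-order solution $u$, which leaves $\eps(-\Delta)^{1/2}u$ on the right-hand side; as you note yourself, this forces you to justify the duality pairing between a Lipschitz-but-not-parabolic $u$ and a weak energy solution $\rho$ of the adjoint, and you propose a mollification/DiPerna--Lions argument to close the gap. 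The paper instead sets $w=u_\eps-u_\eta$ with two positive viscosities and derives $\|u_\eps-u_\eta\|_{L^\infty_t L^p_x}\leq C(\eps-\eta)$ before letting $\eta\to 0^+$; this keeps $(\eps-\eta)(-\Delta)^{1/2}u_\eps$ on the right-hand side, where $u_\eps$ enjoys maximal $L^2$ parabolic regularity ($u_\eps\in L^2_t H^1_x$, $\partial_t u_\eps\in L^2_t L^2_x$, see Remark \ref{energy}), so the energy/duality identity is immediately legitimate and no mollification step is needed. Your route is plausibly completable, but the two-viscosity comparison is cleaner precisely because it removes the only delicate step you flagged. One small imprecision: $(-\Delta)^{1/2}u$ is actually an honest $L^p$ function for $1<p<\infty$ by the Calder\'on equivalence once $Du\in L^\infty$, so the issue is not that it is ``only a distribution''; the real subtlety is the absence of an energy formulation for the inviscid $v$-equation, which the paper's approach avoids.
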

\begin{proof}
We drop the subscript $s$ in $u_{\eps,s}$ for brevity. Set $w=u_\eps-u_\eta$, and note that by the linearity of the fractional Laplacian the function $w$ solves the PDE
\begin{equation}\label{eqw}
\partial_t w+\eta(-\Delta)^\frac12 w-b(x,t)\cdot Dw=-(\eps-\eta)(-\Delta)^\frac12 u_\eps,
\end{equation}
where the drift $b$ is given by $b(x,t)=-\int_0^1 D_pH(\zeta Du_\eps+(1-\zeta)Du_\eta)d\zeta$. We consider the dual problem $\rho:=\rho_\eta$
\begin{equation}\label{fp12}
\begin{cases}
-\partial_t \rho+\eta(-\Delta)^\frac12 \rho+\mathrm{div}(b(x,t)\rho)=0&\text{ in }Q_\tau:=\T^n\times(0,\tau),\\
\rho(\tau)=\alpha&\text{ in }\T^n,
\end{cases}
\end{equation}
where $\alpha\in L^{p'}$, $\alpha\geq0$ and $\|\alpha\|_{L^{p'}(\T^n)}=1$ (it is enough to take $\alpha(\tau)=\frac{w^{p-1}(\tau)}{\|w\|_{L^p(\T^n)}^{p-1}}$), $p'>1$. By duality between \eqref{eqw} and \eqref{fp12} we have
\[
\int_{\T^n}w(\tau)\rho(\tau)\,dx=\underbrace{\int_{\T^n}w(0)\rho(0)\,dx}_{=0\text{ since }w(0)=0}+(\eps-\eta)\underbrace{\iint_{Q_\tau}[-(-\Delta)^\frac12 u_\eps]\rho\,dxdt}_{I}.
\]
We can write the energy formulation by Remark \ref{energy} and the fact that the boundedness of $Du_\eps$ allows us to consider the nonlocal Hamilton-Jacobi equation as a fractional heat equation driven by $\partial_t\cdot+(-\Delta)^\frac12\cdot $. Hence \[
u\in L^2(0,\tau;H^1(\T^n)),\ \partial_t u\in L^2(0,\tau;L^2(\T^n))\]
by the maximal $L^2$ regularity for fractional heat equations \cite{HieberPruss}. Note now that $W^{1,p}$ coincides with $H_p^1$, where $H_p^1$ is the space of Bessel potentials with norm
\[
\|g\|_{H^{1}_p(\T^n)}=\|(I-\Delta)^\frac12g\|_{L^p(\T^n)}\simeq \|g\|_{L^p(\T^n)}+\|(-\Delta)^\frac12g\|_{L^p(\T^n)},
\]
see \cite[Remark 2.3]{CGsima}. Therefore, we have for $g:\R^n\to\R$ and a constant $A>0$ \cite[Theorem V.3 and Lemma V.3]{Stein}
\begin{equation}\label{semieq}
\|(I-\Delta)^\frac12g\|_{L^p(\T^n)}\leq A\|g\|_{W^{1,p}(\T^n)}.
\end{equation}
We apply the H\"older's inequality on the last integral to find
\begin{align*}
(\eps-\eta)I&\leq (\eps-\eta)\|(-\Delta)^\frac12 u_\eps\|_{L^1(0,\tau;L^p(\T^n))}\|\rho\|_{L^\infty(0,\tau;L^{p'}(\T^n))}\\&\leq A(\eps-\eta)\|u_\eps\|_{L^1(0,\tau;W^{1,p}(\T^n))}\|\rho\|_{L^\infty(0,\tau;L^{p'}(\T^n))},
\end{align*}
where the last inequality follows  from \eqref{semieq}. Since the domain is compact, we can bound \[\|Du_\eps\|_{L^p(Q_T)}\leq C_1(T,p)\|Du_\eps\|_{L^\infty(Q_T)}\] for a constant $C_1>0$, and the right-hand side is bounded by a constant independent of $\eps,\eta$ by Lemma 2.5 in \cite{Biswas} (or even by using the gradient bounds in \cite{CGsima} via the semiconcavity estimates, which are independent of the viscosity). It remains to bound $\|\rho\|_{L^\infty_t(L^{p'}_x)}$. Since $u_\eps,u_\eta$ are $L^1_t(L^\infty_x)$ semiconcave independently of the viscosity (see e.g. \cite[Proposition 3.6]{CGsima}), we have that 
\[
\mathrm{div}(b)\geq -c(t),\ c(\cdot)\in L^1_t,\ c:[0,\tau]\to(0,\infty),
\]
and thus $\|[\mathrm{div}(b)]^-\|_{L^1_tL^\infty_x}<\infty$. Indeed, using that $H\in C^2$ is uniformly convex and $D^2 u_\eps,D^2 u_\eta\leq k(t)\in L^1_t$, $k:[0,T]\to(0,\infty)$, we conclude
\[
\mathrm{div}(b)=-\int_0^1 \sum_{i,j}D^2_{p_ip_j}H(\zeta Du_\eps+(1-\zeta)Du_\eta)\left(\zeta\partial_{x_ix_j}u_\eps+(1-\zeta)\partial_{x_ix_j}u_\eta\right)\,d\zeta\geq -c(t).
\]
To show the previous bound we consider only the term involving $A:=\int_0^1 \left(\zeta D^2_{pp}H\right)$, which verifies $\theta I_n\leq A\leq \Theta I_n$ by the assumptions on $H$, the other being similar. Indeed, we have
\[
\sum_{i,j}A_{ij}\partial_{x_ix_j}u_\eps=\sum_{i,j}A_{ij}(\underbrace{\partial_{x_ix_j}u_\eps-k(t)\delta_{ij}}_{U_\eps}+k(t)\delta_{ij}))=\mathrm{Tr}(A\ U_\eps)+k(t)\sum_i A_{ii}\leq n\Theta k(t),
\]
where we used that $U_\eps\leq 0$ implies $\mathrm{Tr}(A\ U_\eps)\leq0$. We are thus in position to apply Lemma \ref{lemmaFP} with $q=p'$ to find 
\[
\|\rho\|_{L^\infty(0,\tau;L^{p'}(\T^n))}\leq C_F\|\rho(\tau)\|_{L^{p'}(\T^n)}
\]
for a positive constant $C_F$ independent of the viscosity. We then have
\[
\int_{\T^n}w(\tau)\rho(\tau)\,dx\leq C(\eps-\eta)\|\alpha\|_{L^{p'}(\T^n)},
\]
where $C$ does not depend neither on $\eps$ nor $\eta$. By duality, passing to the supremum over $\rho(\tau)=\alpha\in L^{p'}$ we find
\[
\|(u_\eps-u_\eta)(\tau)\|_{L^{p}(\T^n)}\leq C(\eps-\eta).
\]
\end{proof}

\begin{rem}\label{finalrem}
The previous computations (see \eqref{eqw}) suggest the validity of the following one-side bound
\[
\|(u_{\eps,\frac12}-u)^+\|_{L^\infty(Q_T)}\leq C\eps
\]
under the unilateral ($1/2$-semi-superharmonic) assumption
\[
-(-\Delta)^\frac12u_\eps\leq C.
\]
This follows by the same duality argument plainly from \eqref{eqw} and using the following properties of $\rho$:
\[
\rho\geq0\text{ and }\int_{\T^n}\rho(x,t)\,dx=1\text{ for all }t\in[0,\tau].
\]
This weakens the order of the regularity condition required for the local case, i.e. $\Delta u\leq C$ (the so-called semi-superharmo\-nicity, cf. \cite{L82book}), though it changes into a nonlocal requirement. Furthermore, we do not know if Theorem \ref{main} holds up to $p=\infty$, since the equivalence between $H^{1}_p$ and $W^{1,p}$ fails at the endpoint $p=\infty$, see for instance \cite[Theorem 6-(c)]{Taibleson}.
\end{rem}

\begin{rem}[Comparison with the literature]\label{comparison}
The previous estimate partially improves the convergence rate for the (nonlocal) vanishing viscosity process of first-order Hamilton-Jacobi equations. In particular, \cite[Theorem 2.8]{Biswas} and \cite[Theorem 6]{DroniouImbert}  established that for Lipschitz solutions (and Lipschitz initial conditions) and locally Lipschitz Hamiltonians one has
\[
\|u_{\eps,\frac12}-u\|_\infty\leq C\eps|\log\eps|.
\]
In contrast to \cite{Biswas,DroniouImbert}, we do not exploit the explicit representation of the half-Laplacian, but use the equivalence $H^1_p\equiv W^{1,p}$, $p\in(1,\infty)$. The results of \cite{Biswas,DroniouImbert} depend on appropriate scaling and decay properties of the kernel of the nonlocal operator.\\
The importance of our findings also relies on the two-side $\mathcal{O}(\eps)$ rate in $L^p$ norms, $p\in(1,\infty)$, for semiconcave solutions, since the regularization by the Laplacian yields the rate
\begin{equation*}
\|u_{\eps,1}-u\|_{L^\infty(0,T;L^p(\T^n))}\leq C\eps^{\frac{1}{2}+\frac{1}{2p}},
\end{equation*}
which deteriorates as $p\to\infty$, see \cite{CGM}. It is worth mentioning that the recent paper \cite[Proposition 4.4]{Tranetal} highlights some particular one-dimensional cases of Hamiltonians $H$ and/or initial data $u_0$ for which one has a two-side estimate in $L^\infty$ norm with rate $\mathcal{O}(\eps)$ or $\mathcal{O}(\eps|\log\eps|)$ when the viscosity is $\eps\Delta u$. 
\end{rem}

\begin{rem}
During the proof we used the compactness of the state space to bound $\|Du_\eps\|_{L^p}$. We expect that the strategy can be repeated on the whole space $\R^n$ asking only $u$ to be $L^1_t(L^\infty_x)$ semiconcave and using merely a $W^{1,p}$ estimate, $p<\infty$. 
\end{rem}

\vspace{1cm}


\begin{thebibliography}{CGHV14}

\bibitem[AH95]{AdamsHedberg}
D.~R. Adams and L.~I. Hedberg.
\newblock {\em Function spaces and potential theory}, volume 314 of {\em
  Grundlehren Math. Wiss.}
\newblock Berlin: Springer-Verlag, 1995.

\bibitem[BCD97]{BCD}
M. Bardi and I. Capuzzo-Dolcetta.
\newblock {\em Optimal control and viscosity solutions of
  {H}amilton-{J}acobi-{B}ellman equations}.
\newblock Systems \& Control: Foundations \& Applications. Birkh\"{a}user
  Boston, Inc., Boston, MA, 1997.

\bibitem[Bis13]{Biswas}
I.~H. Biswas.
\newblock Regularization by {{\(\frac{1}{2}\)}}-{Laplacian} and vanishing
  viscosity approximation of {HJB} equations.
\newblock {\em J. Differ. Equations}, 255(11):4052--4080, 2013.

\bibitem[Cal18]{Calder}
J. Calder.
\newblock {L}ecture notes on viscosity solutions.
\newblock available at
  \url{https://www-users.cse.umn.edu/~jwcalder/viscosity_solutions.pdf}, 2018.

\bibitem[CC03]{CordobaPNAS}
A. C\'{o}rdoba and D. C\'{o}rdoba.
\newblock A pointwise estimate for fractionary derivatives with applications to
  partial differential equations.
\newblock {\em Proc. Natl. Acad. Sci. USA}, 100(26):15316--15317, 2003.

\bibitem[CC04]{Cordoba}
A. C{\'o}rdoba and D. C{\'o}rdoba.
\newblock A maximum principle applied to quasi-geostrophic equations.
\newblock {\em Commun. Math. Phys.}, 249(3):511--528, 2004.

\bibitem[CDJM24]{Cecchin}
A. Cecchin, S. Daudin, J. Jackson, and M. Martini.
\newblock Quantitative convergence for mean field control with common noise and
  degenerate idiosyncratic noise.
\newblock Preprint, {arXiv}:2409.14053, 2024.

\bibitem[CEL84]{CEL}
M.~G. Crandall, L.~C. Evans, and P.-L. Lions.
\newblock Some properties of viscosity solutions of {H}amilton-{J}acobi
  equations.
\newblock {\em Trans. Amer. Math. Soc.}, 282(2):487--502, 1984.

\bibitem[CG19]{CGsima}
M. Cirant and A. Goffi.
\newblock On the existence and uniqueness of solutions to time-dependent
  fractional {MFG}.
\newblock {\em SIAM J. Math. Anal.}, 51(2):913--954, 2019.

\bibitem[CGHV14]{Constantin}
P. Constantin, N. Glatt-Holtz, and V. Vicol.
\newblock Unique ergodicity for fractionally dissipated, stochastically forced
  2d {Euler} equations.
\newblock {\em Commun. Math. Phys.}, 330(2):819--857, 2014.

\bibitem[CGM23]{CGM}
F. Camilli, A. Goffi, and C. Mendico.
\newblock Quantitative and qualitative properties for Hamilton-Jacobi pdes via
  the nonlinear adjoint method, to appear in Ann. Sc. Norm. Super. Pisa, Cl.
  Sci., 2024.

\bibitem[CL83]{CL83tams}
M.~G. Crandall and P.-L. Lions.
\newblock Viscosity solutions of {H}amilton-{J}acobi equations.
\newblock {\em Trans. Amer. Math. Soc.}, 277(1):1--42, 1983.

\bibitem[CL84]{CL84}
M.~G. Crandall and P.-L. Lions.
\newblock Two approximations of solutions of {H}amilton-{J}acobi equations.
\newblock {\em Math. Comp.}, 43(167):1--19, 1984.

\bibitem[DDJ24]{DaudinDelarueJackson}
S. Daudin, F. Delarue, and J. Jackson.
\newblock On the optimal rate for the convergence problem in mean field
  control.
\newblock {\em J. Funct. Anal.}, 287(12):94, 2024.
\newblock Id/No 110660.

\bibitem[DGV03]{Droniou}
J.~Droniou, T.~Gallou{\"e}t, and J.~Vovelle.
\newblock Global solution and smoothing effect for a non-local regularization
  of a hyperbolic equation.
\newblock {\em J. Evol. Equ.}, 3(3):499--521, 2003.

\bibitem[DI06]{DroniouImbert}
J. Droniou and C. Imbert.
\newblock Fractal first-order partial differential equations.
\newblock {\em Arch. Ration. Mech. Anal.}, 182(2):299--331, 2006.

\bibitem[Eva10]{EvansARMA}
L.~C. Evans.
\newblock Adjoint and compensated compactness methods for {H}amilton-{J}acobi
  {PDE}.
\newblock {\em Arch. Ration. Mech. Anal.}, 197(3):1053--1088, 2010.

\bibitem[Fig08]{Figalli}
A. Figalli.
\newblock Existence and uniqueness of martingale solutions for {SDE}s with
  rough or degenerate coefficients.
\newblock {\em J. Funct. Anal.}, 254(1):109--153, 2008.

\bibitem[HP97]{HieberPruss}
M. Hieber and J. Pr{\"u}ss.
\newblock Heat kernels and maximal {{\(L^ p\)}}-{{\(L^ q\)}} estimates for
  parabolic evolution equations.
\newblock {\em Commun. Partial Differ. Equations}, 22(9-10):1647--1669, 1997.

\bibitem[Kru67]{K67II}
S.~N. Kruzhkov.
\newblock Generalized solutions of nonlinear equations of the first order with
  several independent variables. {II}.
\newblock {\em Mat. Sb. (N.S.)}, 72 (114):108--134, 1967.

\bibitem[LBL19]{LBL}
C. Le~Bris and P.-L. Lions.
\newblock {\em Parabolic equations with irregular data and related
  issues---applications to stochastic differential equations}, volume~4 of {\em
  De Gruyter Series in Applied and Numerical Mathematics}.
\newblock De Gruyter, Berlin, 2019.

\bibitem[Lio82]{L82book}
P.-L. Lions.
\newblock {\em Generalized solutions of {H}amilton-{J}acobi equations},
  volume~69 of {\em Research Notes in Mathematics}.
\newblock Pitman, Boston, Mass.-London, 1982.

\bibitem[LT01]{LinTadmor}
C.-T. Lin and E. Tadmor.
\newblock {$L^1$}-stability and error estimates for approximate
  {H}amilton-{J}acobi solutions.
\newblock {\em Numer. Math.}, 87(4):701--735, 2001.

\bibitem[QSTY24]{Tranetal}
J. Qian, T. Sprekeler, H.~V. Tran, and Y. Yu.
\newblock Optimal rate of convergence in periodic homogenization of viscous
  {Hamilton}-{Jacobi} equations.
\newblock Preprint, {arXiv}:2402.03091, to appear in
  Multiscale Modeling and Simulation, 2024.

\bibitem[RS16]{RoncalStinga}
L. Roncal and P.~R. Stinga.
\newblock Fractional {Laplacian} on the torus.
\newblock {\em Commun. Contemp. Math.}, 18(3):26, 2016.
\newblock Id/No 1550033.

\bibitem[Sou85]{Souganidis}
P.~E. Souganidis.
\newblock Existence of viscosity solutions of {H}amilton-{J}acobi equations.
\newblock {\em J. Differential Equations}, 56(3):345--390, 1985.

\bibitem[Ste70]{Stein}
E.~M. Stein.
\newblock {\em Singular integrals and differentiability properties of
  functions}.
\newblock Princeton Mathematical Series, No. 30. Princeton University Press,
  Princeton, NJ, 1970.

\bibitem[Tai63]{Taibleson}
M.~H. Taibleson.
\newblock Lipschitz classes of functions and distributions in {{\(E_ n\)}}.
\newblock {\em Bull. Am. Math. Soc.}, 69:487--493, 1963.

\bibitem[Tra21]{TranBook}
H.~V. Tran.
\newblock {\em Hamilton-Jacobi equations: {T}heory and {A}pplications}.
\newblock AMS Graduate Studies in Mathematics. American Mathematical Society,
  2021.

\bibitem[WT15]{Tian}
J. Wei and R. Tian.
\newblock Well-posedness for the fractional {Fokker}-{Planck} equations.
\newblock {\em J. Math. Phys.}, 56(3):031502, 11, 2015.

\end{thebibliography}

\end{document}